\newtheorem{thm}{Theorem}
\newtheorem{lemma}[thm]{Lemma}
\newtheorem{prop}[thm]{Proposition}
\newtheorem{cor}[thm]{Corollary}
\newtheorem{conj}{Conjecture}
\newtheorem{question}{Question}
\theoremstyle{definition}
\newtheorem{defn}{Definition}
\theoremstyle{remark}
\newcommand{\ff}{\mathbb{F}}
\DeclareMathOperator{\rank}{rank}
\begin{document}

\title{Successful Pressing Sequences for a Bicolored Graph and Binary Matrices}

\author{Joshua Cooper and Jeffrey Davis\\
Department of Mathematics\\
University of South Carolina\\
Columbia, SC 29208}

\maketitle

\begin{abstract}
We apply matrix theory over \(\ff_2\) to understand the nature of so-called ``successful pressing sequences'' of black-and-white vertex-colored graphs.  These sequences arise in computational phylogenetics, where, by a celebrated result of Hannenhalli and Pevzner, the space of sortings-by-reversal of a signed permutation can be described by pressing sequences.  In particular, we offer several alternative linear-algebraic and graph-theoretic characterizations of successful pressing sequences, describe the relation between such sequences, and provide bounds on the number of them.  We also offer several open problems that arose as a result of the present work.
\end{abstract}

\noindent MSC classes:	05C50, 15B33, 92D15.

\noindent Keywords: Binary matrix, pressing sequence, adjacency matrix, matching, bicolored graph.

\section{Introduction}

In a now classical paper in bioinformatics \cite{Hannenhalli}, Hannenhalli and Pevzner showed that there is a polynomial time algorithm to sort signed permutations by reversals, i.e., turn any signed permutation into the identity by reversing subwords (and flipping their signs).  This has important implications for computational phylogenetics: when comparing the sequence of genes of two related species, the shortest length of a sequence of reversals that transforms one into the other is one prominent measure of the evolutionary distance of the associated organisms.  The authors' strategy, and one that was improved upon in later work (for example, \cite{Kaplan}), is to construct the so-called ``breakpoint graph'' for the permutation to be sorted, show that a certain operation on the breakpoint graph corresponds to reversals, and then use certain numerical invariants of subgraphs to guide the sequence of moves to the identity.

This framework is now a keystone of bioinformatics algorithms, but it leaves many questions unanswered.  In particular, the proposed methodologies generate just one successful sorting of the signed permutation under consideration, and it is understood that there are often many such minimum-length sorting sequences.  Since each is only representative of one {\em possible} evolutionary history, it would be valuable to be able to sample from all possible such sequences to obtain more sensitive statistical properties.  As of yet, there is no full understanding of the space of possible histories, so Markov Chain Monte Carlo methods are valuable for approximately uniform sampling.  Such approaches present their own problems, however: it is necessary to obtain a proof of connectivity of the underlying graph of the Markov Chain to know that it will eventually reach every vertex; and it is necessary to obtain bounds on the mixing time of the process to ensure that near-uniformity will be achieved in reasonable time.  Indeed, some researchers have investigated these very kinds of questions: see, for example, \cite{mcmc}.

In order to state our results and situate it in the above discussion, we need the following definitions.

\begin{defn}
A \textbf{bicolored} graph is a pair \((G,c)\) where \(G\) is a simple graph, and \(c : V(G) \rightarrow \{\textrm{black},\textrm{white}\}\) is a coloring of its vertices.  Write \(\overline{\textrm{black}} = \textrm{white}\) and \(\overline{\textrm{white}} = \textrm{black}\).
\end{defn}

Denote by \(V(G)\) the vertex set of a graph, \(E(G)\) its edge set, and \(G[S]\) the induced subgraph of a set \(S \subset V(G)\); let \(N(v) = N_G(v)\) denote the neighborhood of \(v \in V(G)\), i.e., \(\{w \in V(G) : \{v,w\} \in E(G)\}\), and \(N^\ast(v) = N^\ast_G(v)\) the closed neighborhood of \(v\), i.e., \(N^\ast_G(v) = N_G(v) \cup \{v\}\).

\begin{defn}
Consider a bicolored graph, \((G,c)\) with a black vertex \(v \in V(G)\).  ``Pressing \(v\)'' is the operation of transforming \((G,c)\) into \((G',c')\), a new bicolored graph in which \(G[N^\ast(v)]\) is complemented.  That is, \(V(G')=V(G)\),
\[
E(G') = E(G) \triangle \binom{N^\ast(v)}{2},
\]
(where ``\(\triangle\)'' denotes symmetric difference) and \(c'(w) = c(w)\) for \(w \not \in N^\ast(v)\) and \(c'(w) = \overline{c(w)}\) for \(w \in N^\ast(v)\).
\end{defn}

\begin{figure}[h]
\begin{center}
\begin{tabular}{c}
\includegraphics[scale=.5]{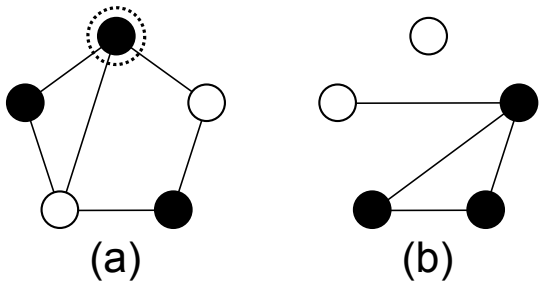}
\end{tabular}
\caption{The vertex enclosed by a dotted circle is pressed in 
graph (a) to obtain graph (b).}
\end{center}
\end{figure}

The ``pressing game'' (to use terminology from \cite{Bixby}) is played by pressing black vertices of \(G\) iteratively with the ultimate goal of transforming \(G\) into an all-white, empty graph.  Hannenhalli and Pevzner showed (\cite{Hannenhalli}) that ``successful'' sequences of presses in the breakpoint graph of a signed permutation, i.e., sequences that result in an all-white empty graph, correspond bijectively to minimum-length sequences of reversals that turn the permutation into the identity.  Therefore, sampling from successful pressing sequences is equivalent to sampling from the minimum length sequences of reversals that sort a signed permutation.  In \cite{Bixby}, the authors make the following ``Pressing Game Conjecture'':

\begin{conj} Every successful pressing sequence can be reached from every other one by a sequence of edits that involve at most four deletions or insertions.
\end{conj}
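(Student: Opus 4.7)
The plan is to recast successful pressing sequences as orderings of a symmetric $\ff_2$-factorization, isolate a simple 4-edit local move on such orderings, and show that this move generates a connected graph on successful sequences.

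First, I would encode $(G,c)$ as a symmetric matrix $A \in \ff_2^{V(G) \times V(G)}$ whose diagonal records the coloring (black $\mapsto 1$, white $\mapsto 0$) and whose off-diagonal entries give the adjacency matrix of $G$. Pressing a black vertex $v$ is then exactly symmetric Gaussian elimination with pivot $(v,v)$: clear column $v$ using row $v$, then mirror the operation on rows. A successful pressing sequence thus amounts to an ordered symmetric $\ff_2$ Cholesky-style factorization reducing $A$ to the zero matrix, and in particular the set $S$ of pressed vertices is an invariant of $A$ alone. Consequently any two successful sequences are orderings of the same set $S$, and the problem reduces to: when can two such orderings be linked by swaps that preserve the factorization?

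The elementary move I would use is an adjacent transposition: in $(v_1,\dots,v_k)$, swap $v_i$ with $v_{i+1}$ whenever they are non-adjacent in the intermediate graph obtained after the first $i-1$ presses. This preserves success, since the corresponding elimination steps commute, and costs exactly four edits (two deletions, two insertions). It therefore suffices to show that the resulting ``adjacent-commutation'' graph on successful orderings is connected. I would proceed by induction on $|S|$: given orderings $\sigma$ and $\tau$, if $\sigma_1 = \tau_1$ press that vertex and recurse; otherwise, letting $v = \tau_1$ sit at position $j$ in $\sigma$, try to bubble $v$ leftward by legal adjacent commutations to align the first vertices. Any successful ordering is a linear extension of the ``pivot-dependency'' partial order on $S$ induced by the elimination, and $v$ is a minimal element of that order (since $\tau$ begins with $v$), so there is no global obstruction to moving $v$ to the front.

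The main obstacle is precisely the bubbling step. Even when $v$ is a global minimum of the dependency order, an immediate predecessor of $v$ in $\sigma$ may be a neighbor of $v$ at that intermediate stage, blocking the adjacent swap outright. Clearing such a local obstruction seems to demand richer 4-edit operations than pure adjacent transpositions — for instance, a 2-for-2 block replacement $(v_{i-1}, v_i) \mapsto (w_{i-1}, w_i)$ by a different legal pair with the same cumulative elimination effect, or a pair of single-vertex relocations (each already costing 2 edits). Pinpointing the right family of ``elementary'' 4-edit operations and proving an exchange-type lemma stating that any two successful orderings are equivalent under them is, I expect, the crux of the conjecture, and presumably why the authors leave it as a conjecture rather than a theorem.
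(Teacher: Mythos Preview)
The paper does not contain a proof of this statement: it is explicitly labeled a \emph{conjecture} (the ``Pressing Game Conjecture'' of Bixby--Flint--Mikl\'{o}s), and the authors go so far as to say they ``have doubts about the statement for general graphs.''  So there is no proof in the paper for your attempt to be compared against.

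That said, your proposed approach contains a genuine error worth flagging.  You assert that ``the set $S$ of pressed vertices is an invariant of $A$ alone,'' and then reduce the problem to connecting different \emph{orderings} of the same set $S$ by adjacent transpositions.  This is false.  What the paper establishes (Corollary~\ref{cor:rank_cor}) is that the \emph{length} of every successful pressing sequence equals $\rank(A(G))$; it does not say, and it is not true, that the underlying vertex set is the same across sequences.  For a minimal counterexample, take two black vertices joined by an edge (and, if you like, an isolated white vertex): the augmented adjacency matrix has rank $1$, and both $(1)$ and $(2)$ are successful pressing sequences on disjoint vertex sets.  The very phrasing of the conjecture in terms of ``deletions or insertions'' rather than transpositions reflects exactly this: edits must be able to change which vertices appear, not merely their order.

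Consequently your entire reduction to linear extensions of a single dependency poset on $S$, and the bubble-sort strategy built on it, addresses only the full-rank case $k = n$ (where indeed $S = [n]$ is forced).  Even there you correctly identify the obstruction---adjacent commutations alone are insufficient---but you have not proposed a concrete enlarged move set or an argument that it connects.  In the general (rank-deficient) case your framework does not even get started.
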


If successful pressing sequences are taken to be the vertices of a graph \(\Pi(G)\), and the edges correspond to edits of at most four deletions or insertions, then the Pressing Game Conjecture implies that \(\Pi(G)\) is connected.  Then a simple random walk converges to a uniform distribution on the set of all successful pressing sequences, and Markov Chain Monte Carlo can be used to analyze typical pressing sequences.  Bixby, Flint, and Mikl\'{o}s \cite{Bixby} proved the conjecture for paths.  Despite this, the current authors have doubts about the statement for general graphs.

In the present manuscript, we explore a few aspects of matrix theory over \(\ff_2\) so as to better understand the successful pressing sequences of a graph.  Among other results: in Corollary \ref{cor:rank_cor}, we show that the rank of the ``augmented adjacency matrix'' of a bicolored graph is the length of every successful pressing sequence of a graph; Theorems \ref{thm:main_thm} and \ref{thm:main_thm2} provide a substantial collection of equivalent characterizations of successful pressing sequences; Proposition \ref{prop:LPQU} gives a matrix-theoretic formulation of the relationship between successful pressing sequences; and Theorem \ref{thm:average} shows that the average number of successful pressing sequences of a random (full-rank) bicolored graph is large.  The final section contains several open problems concerning these sequences that arose in connection with the present work.

We note that some special cases of a few of our results are announced but left largely unproven in \cite{Hartman}; the authors refer to the matrix analogue of pressing as ``clicking'' and to the condition of the existence of a successful pressing sequence that consists of all vertices as ``tightness.''

\section{Preliminaries}

The following result appears (less explicitly) in \cite{Hannenhalli} and \cite{Bergeron}, but we include the proof for completeness.\footnote{Thanks to \'{E}va Czabarka for suggesting this vastly simplified version of the Hannenhalli-Pevzner argument.}

 \begin{prop}\label{prop:success_is_possible}
 Any graph with a black vertex in every component has a successful pressing sequence.
 \end{prop}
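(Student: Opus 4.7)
The plan is to proceed by induction on $|V(G)|$, with the empty graph as the trivial base case. The key observation that drives the induction is that pressing any vertex $v$ leaves $v$ itself isolated and white: since $G[N^*(v)]$ is complemented, every edge from $v$ to $N(v)$ is erased and the color of $v$ is flipped from black to white. Hence pressing is effectively a reduction of the problem to a graph on $|V(G)|-1$ vertices, provided we can pick $v$ so that the resulting smaller graph still has a black vertex in every component.

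Since the pressing operation only alters the component of $v$, one may reduce to the case where $G$ is connected with at least one black vertex (otherwise handle each component independently and concatenate). If $|V(G)|=1$, the lone vertex is black by hypothesis, and a single press finishes. For $|V(G)|\geq 2$, the inductive step rests on finding a black vertex $v$ whose pressing preserves the hypothesis on $G'\setminus\{v\}$.

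The clean case is when some black vertex $v$ has no black neighbor, i.e., $N(v)\subseteq W$. Pressing $v$ then turns every vertex of $N(v)$ black. Any connected component $C'$ of $G'\setminus\{v\}$ either meets $N(v)$—and hence contains a newly-minted black vertex—or is disjoint from $N^*(v)$. In the latter situation, all edges and colors in $C'$ agree with those in $G$, and since no edge of $G$ runs from $C'$ to $N(v)$ (else $C'$ would meet $N(v)$ in $G'$, as edges between $V\setminus N^*(v)$ and $N(v)$ are unchanged), $C'$ is a connected component of $G$, which by hypothesis contains a black vertex whose color persists. So the hypothesis is preserved, and induction applies.

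The harder case is when every black vertex has a black neighbor. Here one must choose $v$ with more care: the danger is creating a multi-vertex all-white component consisting of former black neighbors of $v$ that, after the complementation of $G[N(v)]$, become connected to each other but isolated from $V\setminus N^*(v)$. Ruling this out requires a structural choice of $v$—for instance, a black simplicial vertex (whose neighborhood is a clique, so the complement within $N(v)$ is edgeless and yields no bad component), or more generally a black vertex $v$ for which every non-trivial connected component of $\overline{G[N(v)]}$ on the black neighbors has at least one vertex joined in $G$ to $V\setminus N^*(v)$. The existence of such a $v$ is the main obstacle; I expect to establish it by an extremal argument (for example by choosing a black vertex minimizing a suitable parameter such as degree, or by invoking a leaf of a spanning tree of the black subgraph $G[B]$, and performing a case analysis on how the complementation interacts with the rest of $G$). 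Once such a $v$ is found, the inductive hypothesis applied to $G'\setminus\{v\}$ yields a successful pressing sequence $s$, and prepending $v$ gives one for $G$.
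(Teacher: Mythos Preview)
Your framework is sound and matches the paper's: induct on $|V(G)|$, reduce to connected $G$, press a well-chosen black vertex, and verify that every nontrivial component of the result still contains a black vertex. Your ``clean case'' (some black vertex has no black neighbor) is handled correctly.

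The gap is that the ``harder case'' is not actually proved. You correctly isolate the obstruction---black neighbors of $v$ that become white and might form a nontrivial all-white component---but the suggestions you float do not close it. A black simplicial vertex need not exist; a leaf of a spanning tree of $G[B]$ can still have many black neighbors in $G$; and ``minimizing degree'' alone is not enough to force the desired conclusion. What is missing is a concrete extremal choice together with the verification that it works.

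The paper supplies exactly this. One takes $x$ to be a black vertex with the \emph{fewest black neighbors}, and among those, one of \emph{maximum total degree}. Let $P$ and $Q$ be the white and black parts of $N(x)$. For any $z\in Q$: if $z$ has a black neighbor outside $N^*(x)$, that edge survives pressing; if $z$ is non-adjacent in $G$ to some $p\in P$, then after complementing $G[N^*(x)]$ the edge $zp$ appears and $p$ is now black. In the remaining case $z$ is adjacent to all of $P$ and all of its black neighbors lie in $Q\cup\{x\}$, so $z$ has at most $|Q|$ black neighbors; minimality of $x$ forces exactly $|Q|$, hence $z$ is adjacent to all of $(Q\cup\{x\})\setminus\{z\}$, whence $N_G(z)\supseteq N^*_G(x)\setminus\{z\}$ and $\deg(z)\geq\deg(x)$. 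Maximality of $\deg(x)$ then gives $N^*_G(z)=N^*_G(x)$, so pressing $x$ leaves $z$ isolated and white. Thus every vertex of $Q$ is either adjacent to a black vertex in $G'$ or is itself an isolated white vertex, and the induction goes through. Note that this single extremal choice subsumes your clean case (there $Q=\varnothing$), so no case split is needed.
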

 
 \begin{proof}
It suffices to prove the statement for connected graphs. Let \(G\) be a connected graph and \(X\) be the set of black vertices with the fewest possible black neighbors. Choose some \(x \in X\) such that \(\deg(x)\) is maximal in \(X\).  When \(x\) is pressed, we obtain \(G'\).  We claim that each component of \(G'\) is either a white isolated vertex or has at least one black vertex.
 
Let \(N = N_G(x)\) be the set of neighbors of \(x \in G\), let \(P\) be the set of vertices in \(N\) that were white in \(G\), and let \(Q = N \setminus P\).  Note that \(G\) and \(G'\) are identical except on the induced subgraphs of \(N \cup \{x\}\).  Every vertex in \(V(G) - (N \cup \{x\})\) is in a component with a vertex of \(N\) (in both \(G\) and \(G'\)), so it suffices to show that each vertex of \(N \cup \{x\}\) is in a component of \(G'\) with a black vertex or is an isolated white vertex.  Furthermore, in \(G'\), \(x\) is isolated and white and the vertices of \(P\) are black, so we need only consider the elements of \(Q\).

Pick some \(z \in Q\). If \(z\) is adjacent to a black vertex outside of \(N \cup \{x\}\) or \(z\) is not adjacent in \(G\) to some vertex in \(P\), then in \(G'\), \(z\) is adjacent to a black vertex. Otherwise, in \(G\), \(z\) is adjacent to all vertices of \(P\) and its black neighbors are a subset of \(Q \cup \{x\} \setminus \{z\}\). By the choice of \(x\), this implies that the closed neighborhoods of \(x\) and \(z\) are the same in \(G\), which implies that \(z\) is a white isolated vertex in \(G'\).
\end{proof}

\begin{defn}
The \textbf{augmented adjacency matrix} \(A(G) \in \ff_2^{n \times n}\) of a bicolored graph \(G\) on \(n\) vertices, is the adjacency matrix of \(G\) where the entries along main diagonal correspond to the vertices \(G\) and are indexed by the color of the vertex; \(0\) if white or \(1\) if black.
\end{defn}
 
Given a bicolored graph \(G\), we can define a (loopy simple, uncolored) graph \(\hat{G}\) to be the graph on the same vertex set with the same edges, but with a loop at each black vertex (and none at white vertices).  A perfect matching in such a graph is a set of edges incident to every vertex exactly once, where a loop is considered to be incident to its vertex only once.  A special case of the following result (that of zero diagonal) appears in \cite{GHL}.

\begin{prop} \label{prop:matchings} The number of perfect matchings in the loopy graph \(\hat{G}\) corresponding to a bicolored graph \((G,c)\) is odd if and only if \(A(G)\) is invertible over \(\ff_2\).
\end{prop}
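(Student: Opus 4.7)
The plan is to compute $\det(A(G))$ over $\mathbb{F}_2$ via the Leibniz expansion and identify the surviving terms as perfect matchings in $\hat{G}$.

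First I would write
\[
\det(A(G)) = \sum_{\sigma \in S_n} \prod_{i=1}^n A(G)_{i,\sigma(i)} \pmod{2},
\]
noting that signs disappear over $\mathbb{F}_2$. A permutation $\sigma$ contributes a $1$ precisely when, for every $i$, either (a) $\sigma(i)=i$ and $i$ is a black vertex (so the diagonal entry equals $1$), or (b) $\sigma(i)\neq i$ and $\{i,\sigma(i)\}\in E(G)$. Writing $\sigma$ in cycle notation, this says that fixed points of $\sigma$ are black vertices (loops of $\hat{G}$) and each non-trivial cycle $(v_1\,v_2\,\cdots\,v_k)$ traverses a closed walk of edges $\{v_1,v_2\},\{v_2,v_3\},\ldots,\{v_k,v_1\}$ in $G$.

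Next I would observe that the involutions among such $\sigma$ are exactly products of fixed loops and $2$-cycles, and these are in bijection with perfect matchings of $\hat{G}$: each $2$-cycle $(i\ j)$ picks the edge $\{i,j\}$, and each fixed point $i$ picks the loop at $i$. Hence the count of contributing involutions equals the number of perfect matchings in $\hat{G}$. The remaining work is to show that contributions of non-involutions cancel mod $2$.

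The key step is to construct a fixed-point-free involution $\phi$ on the set of contributing $\sigma$ with at least one cycle of length $\ge 3$. For such $\sigma$, let $C$ be the cycle of $\sigma$ of length $\ge 3$ containing the smallest vertex that lies in any such cycle, and define $\phi(\sigma)$ by reversing $C$ (that is, replacing the cycle $(v_1\,v_2\,\cdots\,v_k)$ by $(v_1\,v_k\,v_{k-1}\,\cdots\,v_2)$) while leaving all other cycles unchanged. Because $G$ is undirected, the reversed cycle still uses edges of $G$, so $\phi(\sigma)$ contributes; because $k\ge 3$ the reversed cycle is distinct from the original; and $\phi$ is clearly an involution whose action preserves which vertex is minimal in a long cycle. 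Thus $\phi$ pairs up non-involutory contributors, and their total contribution vanishes mod $2$.

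Combining, $\det(A(G)) \equiv \#\{\text{perfect matchings of }\hat{G}\} \pmod{2}$, so $A(G)$ is invertible over $\mathbb{F}_2$ iff $\hat{G}$ has an odd number of perfect matchings. The main obstacle is verifying that the pairing $\phi$ is well-defined and truly fixed-point-free on the non-involution contributors; this relies on the fact that reversing a cycle of length at least three produces a genuinely different permutation, which fails for $2$-cycles and is precisely why involutions are not paired off.
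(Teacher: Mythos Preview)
Your argument is correct and is essentially the same idea as the paper's: both show that only involutory permutations survive in the Leibniz expansion mod~2, because a cycle of length at least three and its reversal cancel. The paper packages this cancellation by citing the formula $\mathrm{perm}(A)=\sum_{\mathcal{C}}2^{Z(\mathcal{C})}$ (the factor $2^{Z(\mathcal{C})}$ being exactly the two orientations of each long cycle), while you build the pairing $\phi$ by hand; your version is thus more self-contained but not a different route.
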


\begin{proof} It is well known that the permanent (which is equal to the determinant in characteristic 2) of \(A(G)\) is equal to 
\[
\sum_{\mathcal{C}} 2^{Z(\mathcal{C})},
\]
where \(\mathcal{C}\) ranges over all {\em vertex circuit covers}, i.e., families of circuits (closed walks) in which each vertex appears exactly once and \(Z(\mathcal{C})\) is the number of such circuits of length greater than two.  (See, for example, \cite{Galvin}.)  Therefore, over \(\ff_2\), the only terms which make a contribution to \(\det(A(G))\) are those in which there are no circuits of length more than two, i.e., every component is a loop or a single edge -- precisely the condition of being a perfect matching.  Since \(\det(A(G))=1\) if and only if \(A(G)\) is invertible, this is equivalent to there being an odd number of perfect matchings.
\end{proof}

\section{Matrix Theory}
Define the function \(f(M)\) on \(n \times n\) nonzero matrices over \(\ff_{2}\) as follows; the action of \(f\) will amount to a slight modification of Gaussian elimination, wherein row-permutations are prohibited and the pivot row is added to itself as well.  Let \(s\) denote the smallest row index of a left-most \(1\) in \(M\), that is, the positive integer for which there exists a \(t\) so that
\begin{enumerate}
\item \(M_{s,t} = 1\)
\item \(M_{s,j} = 0\) if \(j < t\)
\item If \(i < s\) and \(j <= t\), then \(M_{i,j} = 0\).
\end{enumerate}
Note that \(s\) and \(t\) are uniquely determined by \(U\) in satisfying the above requirements.  Let \(U = U(M)\) be the set of row indices which have a \(1\) in column \(t\), i.e.,
\begin{center}
\(U = \{i : M_{i,t} = 1\}\).
\end{center}
Let \(f(M)\) denote the \(n \times n\) matrix so that
\[
f(M)_{i,j} = \left \{ \begin{array}{ll}
M_{i,j} & \textrm{ if } i \not \in U \\
M_{i,j} + M_{s,j} & \textrm{ if } i \in U
\end{array} \right .
\]
Note that, for every matrix \(M\), there is a sequence of \(s\)'s and \(t\)'s that arise from the iterative application of \(f\) to \(M\).  That is, given \(M\), there is an increasing sequence \(s_{1},s_{2},...,s_{p}\) and increasing sequence \(t_{1},...,t_{p}\) which serve as the indices in the above definition of \(f(M)\), \(f(f(M))\), etc.  Indeed, it is easy to see that the sequence must eventually result in the all-zeroes matrix, so this process terminates at some finite \(p = p(M)\).

If, for each \(r \in [p]\), \(s_r = r\), we call \(M\) ``leading principally nonsingular (LPN)''.  If \(M\) is LPN, then the sequence \(M, f(M), f(f(M)), \ldots, f^{(p)}(M)\), \(p = \rank(M)\), is precisely the sequence of matrices one obtains by performing Gaussian elimination on \(M\), with the additional operation of adding pivot rows to themselves (thus replacing them with the zero vector).  Furthermore, this elimination does not involve row permutations.  Therefore, \(M\) is row-reducible to a diagonal matrix whose leading principle submatrix of size \(p\) is the identity matrix \(I_p\), and whose other entries are zero.

Note that, if \(M\) is symmetric, then \(f(M)\) is, as well.  Indeed,
\begin{align*}
f(M)_{i,j} &= M_{i,j} + M_{s,j} \cdot M_{i,s} && \textrm{by the definition of \(f\)} \\
&= M_{j,i} + M_{j,s} \cdot M_{s,i} && \textrm{by the symmetry of \(M\)} \\
&= M_{j,i} + M_{s,i} \cdot M_{j,s} & \\
&= f(M)_{j,i} && \textrm{by the definition of \(f\)}.
\end{align*}
Therefore, suppose that the \(M\) above is \(A(G)\) and is LPN.  Then it is straightforward to see that \(f(M)\) is in fact \(A(G^\prime)\) where \(G^\prime\) is obtained from \(G\) by pressing its lowest-indexed (black) vertex.  Since \(A(G)\) being the all-zeroes matrix is precisely the condition that \(G\) has no edges and all vertices are white, \(M = A(G)\) being LPN is equivalent to \(G\) having a successful pressing sequence consisting of the vertices indexing the first \(\rank(A(G))\) columns of \(M\) in increasing order.

We may conclude the following.

\begin{prop} \label{prop:row_reduction}
For a graph \(G\) with vertex set \([n]\), the following are equivalent:
\begin{enumerate}
\item Gaussian elimination applied to \(A(G)\) consists of row reduction applied, in consecutive order, to the first \(\rank(A(G))=k\) rows.
\item The first \(k\) leading principal minors of \(A(G)\) are nonzero, and the rest are zero.
\item \(1,2,\ldots,k\) is a successful pressing sequence for \(G\).
\end{enumerate}
\end{prop}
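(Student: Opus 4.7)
The plan is to leverage the material preceding the proposition, which already establishes the correspondence between the $f$-iteration on $A(G)$ and the sequence of pressings on $G$. Condition (1) is a rephrasing of $A(G)$ being LPN, and the paragraph just above the proposition directly observes that $A(G)$ is LPN if and only if $1, 2, \ldots, k$ is a successful pressing sequence. Thus (1) $\Leftrightarrow$ (3) is already in hand, and the real work lies in (1) $\Leftrightarrow$ (2).

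First I would prove by induction on $r$ that, under the LPN hypothesis, one has $t_r = r$ in addition to $s_r = r$, and that $f^{(r)}(A(G))$ is symmetric with its first $r$ rows and first $r$ columns identically zero. Row clearing is built into $f$ (the pivot row is added to itself); symmetry is preserved by $f$, as already verified in the excerpt; column clearing then follows from symmetry. To pin down $t_r = r$: LPN gives $s_r = r$, so row $r$ of $f^{(r-1)}(A(G))$ contains a $1$ at some column $j_0 \ge r$; by symmetry there is a $1$ at row $j_0$, column $r$, hence $t_r \le r$; combined with $t_r \ge r$ from the cleared columns, $t_r = r$.

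With $s_r = t_r = r$ for $r \le k$, the lower-right $(n-r) \times (n-r)$ block of $f^{(r)}(A(G))$ coincides with the Schur complement in $A(G)$ of its leading principal $r \times r$ block $M_r$, since the ``add pivot row to itself'' modification only affects the cleared rows and columns. The standard Schur-complement identity $f^{(r-1)}(A(G))_{r,r} = \det(M_r)/\det(M_{r-1})$ then drives an induction establishing that the first $k$ diagonal $f$-pivots are all $1$ if and only if the first $k$ leading principal minors of $A(G)$ are all $1$. The ``rest are zero'' clause of (2) is automatic: $\rank(A(G)) = k$ forces every $(k+1) \times (k+1)$ submatrix, and in particular each leading principal one, to be singular.

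The main obstacle is the symmetric-LPN implication $s_r = r \Rightarrow t_r = r$, which hinges on the symmetry of $f^{(r-1)}(A(G))$ together with the ``cleared column'' observation; once that is in hand, the remainder is bookkeeping with the classical pivot / leading-principal-minor correspondence.
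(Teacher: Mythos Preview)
Your proposal is correct and tracks the paper's approach. The paper gives no standalone proof of this proposition; it simply writes ``We may conclude the following'' after the discussion of $f$, LPN, and the observation that $f(A(G)) = A(G')$ for the pressed graph $G'$. That discussion is exactly what you invoke for $(1)\Leftrightarrow(3)$, and the paper treats $(1)\Leftrightarrow(2)$ as essentially definitional (hence the name ``leading principally nonsingular'') rather than proving it.

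Your contribution beyond the paper is twofold. First, you make $(1)\Leftrightarrow(2)$ explicit via the standard Schur-complement/pivot identity $f^{(r-1)}(A(G))_{r,r}=\det(M_r)/\det(M_{r-1})$, which is indeed the classical route and works cleanly over $\ff_2$. Second, your inductive verification that $t_r=r$ (not merely $s_r=r$) under the LPN hypothesis is a genuine addition: the paper's symmetry check for $f$ writes $f(M)_{i,j}=M_{i,j}+M_{s,j}M_{i,s}$, tacitly using $M_{i,s}$ where the definition has $M_{i,t}$, so it implicitly assumes $s=t$. Your argument---row $r$ has a $1$ at some column $j_0\ge r$, symmetry puts a $1$ at $(j_0,r)$, hence column $r$ is nonzero, hence the first nonzero column $t_r$ satisfies $t_r\le r$---cleanly closes that loop and makes the induction (symmetry $\Rightarrow$ $t_r=r$ $\Rightarrow$ symmetry is preserved by $f$) non-circular. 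So your write-up is more rigorous than the paper's on this point, but the underlying strategy is the same.
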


\begin{cor} \label{cor:rank_cor} The number of vertices in any successful pressing sequence for a graph \(G\) depends only on the graph, and is equal to \(\rank(A(G))\).
\end{cor}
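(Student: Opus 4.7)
The plan is to reduce the corollary directly to Proposition \ref{prop:row_reduction} by an appropriate relabeling of vertices. Given any successful pressing sequence \(v_1, v_2, \ldots, v_m\) for \(G\), I would relabel the vertices so that \(v_i\) becomes vertex \(i\) for \(i \le m\), with the remaining (unpressed) vertices assigned labels \(m+1, \ldots, n\) in some arbitrary order. Call the resulting graph \(G'\). Because pressing is defined purely graph-theoretically and does not depend on the labeling, the sequence \(1, 2, \ldots, m\) is then a successful pressing sequence for \(G'\).

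Next, I would observe that the augmented adjacency matrices are related by \(A(G') = P A(G) P^T\) for the permutation matrix \(P\) encoding the relabeling, so \(\rank(A(G')) = \rank(A(G))\). Now I would invoke Proposition \ref{prop:row_reduction} applied to \(G'\), read as asserting the equivalence of its three conditions for a specific value of \(k\): taking \(k = m\), condition (3) holds by construction, and the equivalence with condition (1) then yields \(m = \rank(A(G'))\). Combining, \(m = \rank(A(G))\). Since the right-hand side does not depend on the choice of pressing sequence, all successful pressing sequences of \(G\) have the same length \(\rank(A(G))\).

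The only point requiring care is the interpretation of Proposition \ref{prop:row_reduction}: one must read its statement as asserting, for each \(k\), that the three conditions with that value of \(k\) are equivalent, so that applying it with \(k = m\) extracts the rank identity. Beyond that, the argument is essentially formal — it uses only the relabeling invariance of rank of \(A(G)\) and the fact that pressing is a combinatorial operation on the underlying bicolored graph — so I do not anticipate any substantive obstacle.
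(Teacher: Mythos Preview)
Your argument is correct and is precisely the intended one: the paper states Corollary~\ref{cor:rank_cor} as an immediate consequence of Proposition~\ref{prop:row_reduction} without a separate proof, and the relabeling/conjugation step you spell out is exactly what is implicit there. Your care about reading the proposition as an equivalence \emph{for each fixed} $k$ is well placed and is the only thing to check.
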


The preceding result justifies the following definition.

\begin{defn}
The \textbf{pressing number} of a graph is the number of presses required to transform the graph into an all-white, empty graph.
\end{defn}

A matrix \(M\) is said to have an \(LU\)-decomposition if there exist a lower triangular matrix \(L\) and an upper triangular matrix \(U\) so that
\[
M = LU.
\]
Call a matrix \(M\) ``Cholesky'' (to borrow terminology from the theory of real/complex matrices; q.v.~\cite{Horn}) if there exists a lower-triangular \(L\) so that \(M = L L^T\); such a product is evidently a special type of \(LU\)-decomposition.  The following lemma is folkloric.

\begin{lemma} \label{lem:LU_is_unique} If \(M = LU\) and \(M\) is invertible, this decomposition is unique.
\end{lemma}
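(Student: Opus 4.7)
The plan is the standard uniqueness argument for $LU$-decompositions, with the crucial observation that over $\ff_2$ the usual normalization ``unit lower-triangular'' comes for free: the only invertible element of $\ff_2$ is $1$, so every invertible triangular matrix automatically has $1$'s on the diagonal.

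First I would assume two decompositions $M = L_1 U_1 = L_2 U_2$ and note that, since $M$ is invertible, $\det(L_i)\det(U_i) = \det(M) \neq 0$, so each of $L_1,L_2,U_1,U_2$ is individually invertible. Rearranging gives
\[
L_2^{-1} L_1 \;=\; U_2 U_1^{-1}.
\]
The left side is a product of invertible lower-triangular matrices and hence lower-triangular; the right side is upper-triangular by the analogous argument. Any matrix that is simultaneously lower- and upper-triangular is diagonal, so both sides equal some diagonal $D$.

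Next I would identify $D$. Because each $L_i$ is invertible over $\ff_2$ and lower-triangular, its diagonal entries lie in $\ff_2 \setminus \{0\} = \{1\}$, so the diagonal of $L_i$ (and of $L_i^{-1}$, and of their product) is all $1$'s. Thus $D = I$, giving $L_1 = L_2$ and $U_1 = U_2$.

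I do not expect a real obstacle here; the only subtlety worth flagging is that without the $\ff_2$ hypothesis one would have to impose the usual ``$L$ unit-diagonal'' convention by hand to get uniqueness, whereas over $\ff_2$ invertibility of the triangular factors forces this convention automatically. The lemma as stated is therefore a genuine (and convenient) feature of the characteristic-$2$ setting rather than something requiring the general normalization caveat.
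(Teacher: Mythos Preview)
Your argument is correct and is essentially identical to the paper's own proof: assume two decompositions, use invertibility of the factors to write $L_2^{-1}L_1 = U_2 U_1^{-1}$, observe this matrix is both lower- and upper-triangular hence diagonal, and then use that the only invertible diagonal matrix over $\ff_2$ is the identity. Your additional remarks about diagonal entries being forced to equal $1$ and the contrast with the general-field case are accurate elaborations but not a different approach.
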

\begin{proof} Suppose \(M = LU = L'U'\).  Then \(L\), \(U\), \(L'\), and \(U'\) are invertible, so
\[
L'^{-1} L = U' U^{-1}.
\]
The left-hand side of this equation is lower-triangular and the right is upper-triangular, so they must both be diagonal.  However, since the only invertible diagonal matrix over \(\ff_2\) is the identity, \(L = L'\) and \(U = U'\).
\end{proof}

\begin{lemma} \label{lem:symmetric_and_LU} If a symmetric matrix \(M\) over \(\ff_2\) has an \(LU\)-decomposition, then it has a Cholesky decomposition \(\tilde{L} \tilde{L}^T\).
\end{lemma}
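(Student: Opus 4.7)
The plan is to induct on $n$, driven by the invertible case, which follows elegantly from uniqueness of the LU decomposition. Since $M$ is symmetric, $M = LU = U^T L^T$ furnishes two LU decompositions of $M$, and when $M$ is invertible Lemma~\ref{lem:LU_is_unique} forces $L = U^T$, so $M = L L^T$ is already a Cholesky with $\tilde{L} = L$.

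For the inductive step on a general (possibly singular) $M = LU$, let $p$ be the largest integer with $\det(M_p) \neq 0$, equivalently with $L_{ii} U_{ii} = 1$ for all $i \le p$. When $p \ge 1$, write $M = \begin{pmatrix} M_p & B \\ B^T & C \end{pmatrix}$ and decompose $L$ and $U$ conformably. The invertible-case argument applied to $M_p = L_p U_p$ yields $M_p = L_p L_p^T$ and $U_p = L_p^T$. A direct computation shows that the Schur complement $C - B^T M_p^{-1} B$ is equal to $L_{22} U_{22}$ (the trailing diagonal blocks of $L$ and $U$), which is a smaller symmetric matrix with an LU decomposition, so by the inductive hypothesis it has a Cholesky $Y Y^T$. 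Assembling $\tilde{L} = \begin{pmatrix} L_p & 0 \\ L_{21} & Y \end{pmatrix}$ and using the relation $L_{21}^T = U_{12}$ that drops out of the invertible-case argument on $M_p$, one verifies the block identities required for $\tilde{L} \tilde{L}^T = M$.

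The remaining case $p = 0$ is the main obstacle. Here $M_{11} = 0$, and from $M = LU$ together with symmetry one deduces that the entire first row and column of $M$ vanish (since $L_{11} U_{11} = 0$ forces one of these factors to be zero, which in turn annihilates the first row or column of $LU$ via lower/upper triangularity, the other dropping out by symmetry). Thus $M = \begin{pmatrix} 0 & 0 \\ 0 & M'' \end{pmatrix}$, and a candidate Cholesky factor has the form $\tilde{L} = \begin{pmatrix} 0 & 0 \\ v & \tilde{L}' \end{pmatrix}$, which reduces the task to finding $v \in \ff_2^{n-1}$ such that $M'' + v v^T$ admits an LU decomposition and hence (by the inductive hypothesis) a Cholesky. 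Writing $L = \begin{pmatrix} 0 & 0 \\ c & L' \end{pmatrix}$ and $U = \begin{pmatrix} 0 & d^T \\ 0 & U' \end{pmatrix}$, one finds $M'' = c d^T + L' U'$, and the natural candidates are $v \in \{c,\, d,\, c+d\}$, selected to interact cleanly with the structural identity $L' U' + (L' U')^T = c d^T + d c^T$ that symmetry forces on $M''$. Establishing that a valid $v$ always exists is where I expect the main work to lie: small examples show that no single member of $\{c, d, c+d\}$ works universally, so in the generic subcase a more combinatorial argument based on the support patterns of the nonzero rows and columns of $M''$ will be required.
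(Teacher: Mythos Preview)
Your $p \geq 1$ case is exactly the paper's ``first row nonzero'' case: block-decompose $L$ and $U$ around the invertible leading block, invoke uniqueness (Lemma~\ref{lem:LU_is_unique}) to get $U_p = L_p^T$ and $U_{12} = L_{21}^T$, and apply induction to the symmetric Schur complement $L_{22}U_{22}$. This part is correct and matches the paper line for line.

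The divergence is at $p=0$. The paper does not search for a correction vector $v$ at all: it simply asserts that when the first row of $M$ vanishes (hence, by symmetry, so does the first column), the submatrix $M''$ obtained by deleting that row and column again admits an $LU$-decomposition, and applies the inductive hypothesis directly --- in your language, it takes $v=0$. So from the paper's standpoint the combinatorial search you propose is unnecessary.

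Your instinct that this step is delicate is, however, well founded. The paper's bridging claim (``existence of an $LU$-decomposition is equivalent to Gaussian elimination proceeding without row permutations,'' and hence passes from $M$ to $M''$) is not valid for singular matrices in the generality stated. For example,
\[
M=\begin{pmatrix}0&0&0\\0&0&1\\0&1&0\end{pmatrix}
=\begin{pmatrix}0&0&0\\1&0&0\\0&1&0\end{pmatrix}
\begin{pmatrix}0&0&1\\0&1&0\\0&0&0\end{pmatrix}
\]
is symmetric with an $LU$-factorization, yet $M''=\bigl(\begin{smallmatrix}0&1\\1&0\end{smallmatrix}\bigr)$ has none (any $2\times 2$ lower--upper product with vanishing $(1,1)$ entry has a zero first row or a zero first column). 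So neither the paper's argument nor your proposal actually closes the $p=0$ case: the paper's shortcut is incorrect, and your route via finding $v$ with $M''+vv^T$ LU-decomposable is a reasonable repair strategy but, as you acknowledge, still lacks the existence argument. That missing piece is a genuine gap you would need to fill.
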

\begin{proof}
We proceed by induction.  The base case is trivial: \(M = [0]\) or \(M = [1]\).  Suppose \(M\) is \(n \times n\) and the statement is true for all \(1 \leq k < n\).  If the first row of \(M\) is all zero, then Gaussian elimination of \(M\) can proceed without row permutations if and only if this is true of \(M^\prime\), the matrix \(M\) with its first row and column removed.  Since the existence of an \(LU\)-decompostion is equivalent to Gaussian elimination proceeding without the necessity of row permutations, it follows that we may apply the inductive hypothesis to \(M^\prime\).  Suppose the first row of \(M\) is nonzero.  Then \(M\) can be written
\begin{align*}
M &= LU \\
& = \left [ \begin{array}{ccc} L_0 & O \\ A & B \end{array} \right ]
\left [ \begin{array}{ccc} U_0 & C \\ O & D \end{array} \right ] \\
& = \left [ \begin{array}{ccc} L_0U_0 & L_0C \\ AU_0 & AC+BD \end{array} \right ]
\end{align*}
where \(L_0\) are \(U_0\) are invertible leading principal submatrices, \(B\) is lower-triangular and \(D\) is upper-triangular -- because, otherwise, the first row or column of \(M\) would be zero.  Since \(M_0 = L_0U_0\) is nonsingular, this decomposition of \(M_0\) is unique by Lemma \ref{lem:LU_is_unique}.  Thus, \(M_0 = M_0^T = U_0^{T} L_0^{T}\) implies that \(U_0^T = L_0\). Since \(M\) is symmetric,
\[
L_0C = (AU_0)^T = U_0^T A^T = L_0 A^T,
\]
whence \(C = A^T\).  Therefore, if \(M_1 = AC + BD\) is the lower-right \((n-k)\times(n-k)\) principal submatrix of \(M\), we may rewrite
\[
M_1 - AA^T = BD.
\]
Since the left-hand side is symmetric, the right-hand is as well, and we may apply induction: \(M_1 - AA^T\) has an \(LU\)-decomposition of the form \(BD\), so it has an \(L_1L_1^T\)-decomposition as well.  Therefore, we may let
\[
\tilde{L} = \left [ \begin{array}{ccc} L_0 & O \\ A & L_1 \end{array} \right ]
\]
and conclude that \(M = \tilde{L} \tilde{L}^T\).
\end{proof}

\begin{thm}\label{thm:main_thm} Given a bicolored labeled graph \(G\) and integer \(k\), the following are equivalent:
\begin{enumerate}
\item The pressing number of \(G\) is \(k\).
\item \(A(G)\) has rank \(k\) and can be written
\[
A(G) = P^T L L^T P
\]
for some lower-triangular matrix \(L\) and permutation matrix \(P\).
\item \(rank(A(G)) = k\) and \(G\) has a black vertex in each component that is not an isolated vertex.
\item There is some permutation matrix \(P\) so that the \(j\)-th leading principal minor of \(P^T A(G) P\) is nonzero for \(j \in [k]\) and is zero for \(j > k\) if \(k < n\).
\item There is an ordering of the vertices \(v_1,\ldots,v_n\) of \(\hat{G}\) so that the induced subgraph \(\hat{G}[\{v_1,\ldots,v_j\}]\) has an odd number of perfect matchings for each \(j \in [k]\), and, for each \(j \in [n] \setminus [k]\), \(\hat{G}[\{v_1,\ldots,v_j\}]\) has an even number of perfect matchings.
\item \(A(G) = P^T L U P\) for some permutation matrix \(P\), lower triangular matrix \(L\), and upper triangular matrix \(U\), where \(\rank(LU) = k\).
\end{enumerate}
\end{thm}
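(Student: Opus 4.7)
The plan is to organize the equivalences as a hub around condition (4), which directly describes Gaussian elimination on a relabeled $A(G)$. The key principle is that pressing commutes with vertex relabeling: for any permutation matrix $P$, there is a relabeling $\sigma$ of $V(G)$ such that $P^T A(G) P = A(\sigma(G))$, and the pressing number is preserved under such a relabeling since a successful sequence for $G$ transports to one for $\sigma(G)$ via $\sigma$. Consequently, $G$ has pressing number $k$ iff some relabeling makes $1, 2, \ldots, k$ a successful pressing sequence for the relabeled graph, and Proposition \ref{prop:row_reduction} then translates this condition into (4), giving (1) $\Leftrightarrow$ (4).

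For (4) $\Leftrightarrow$ (5), I would apply Proposition \ref{prop:matchings} to each leading principal submatrix of $B = P^T A(G) P$: the $j$-th leading principal minor of $B$ is the determinant of the augmented adjacency matrix of the induced subgraph of $\hat{G}$ on the first $j$ reordered vertices, and vanishes iff that subgraph has an even number of perfect matchings. So (4) and (5) are literally the same statement under the obvious bijection between permutations $P$ and vertex orderings.

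For the matrix-factorization conditions, (2) $\Rightarrow$ (6) is immediate (Cholesky is a special $LU$), and (4) $\Rightarrow$ (2) follows because the leading-minor pattern guarantees that Gaussian elimination on the symmetric matrix $P^T A(G) P$ proceeds without row swaps, yielding an $LU$-decomposition which Lemma \ref{lem:symmetric_and_LU} upgrades to a Cholesky decomposition. The implication (6) $\Rightarrow$ (4) requires more care: given $P^T A(G) P = LU$ with $\mathrm{rank}(LU)=k$, I would show that Gaussian elimination succeeds without row swaps and produces precisely $k$ nonzero pivots on the diagonal, yielding the required leading-minor pattern (with the trailing minors forced to zero by the rank bound). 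Finally, (1) $\Leftrightarrow$ (3) combines Proposition \ref{prop:success_is_possible} (applied componentwise, since isolated white vertices need no action) with Corollary \ref{cor:rank_cor}; for (1) $\Rightarrow$ (3), any non-isolated component without a black vertex would contain an edge no press could ever remove, since presses only affect closed neighborhoods of pressed black vertices.

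The main obstacle I foresee is (6) $\Rightarrow$ (4): when $LU$ is rank-deficient over $\ff_2$, one must verify that the factorization really forces a well-defined sequence of $k$ nonzero pivots followed by zeros, rather than some arbitrary interleaving of pivot and skip steps. Every other piece of the argument reduces to invoking one of the previously established results together with the relabeling principle.
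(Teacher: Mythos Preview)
Your proposal is essentially the paper's own proof: the same auxiliary results (Propositions \ref{prop:success_is_possible}, \ref{prop:matchings}, \ref{prop:row_reduction}, Corollary \ref{cor:rank_cor}, Lemma \ref{lem:symmetric_and_LU}) are invoked for the same pairs of conditions, only with the equivalences routed through (4) instead of (1).

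The one place where your plan does not quite work as stated is exactly the obstacle you flag. From $B=P^{T}A(G)P=LU$ with $\rank(LU)=k$ you cannot conclude that the first $k$ leading principal minors of $B$ are nonzero: take $B=\left(\begin{smallmatrix}0&0\\0&1\end{smallmatrix}\right)$, which is $LU$ of rank $1$ but has vanishing first minor. So the \emph{same} $P$ need not witness (4). The paper sidesteps this by linking (6) to (1) rather than to (4): an $LU$-factorization of $B$ is equivalent to swap-free Gaussian elimination on $B$, and the rows on which pivots actually occur give a successful pressing sequence for the relabeled graph; a further relabeling then moves those pivot indices to an initial segment, which is the $P$ that witnesses (4). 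In your framework the cleanest repair is to reverse the arrow you wrote between (2) and (6): Lemma \ref{lem:symmetric_and_LU} already gives $(6)\Rightarrow(2)$, and since you have $(1)\Leftrightarrow(3)\Leftrightarrow(4)\Rightarrow(2)\Rightarrow(6)$, it then suffices to close the loop with a short $(2)\Rightarrow(3)$ or $(6)\Rightarrow(3)$ argument (rank is immediate; a nontrivial all-white component forces a symmetric zero-diagonal nonzero block, which obstructs any Cholesky/LU factorization after simultaneous permutation).
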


We note that there are yet other equivalent conditions in terms of rank of submatrices which are given by Johnson and Okunev in \cite{Okunev}.

\begin{proof}
\(1 \Leftrightarrow 6\): This follows from Proposition \ref{prop:row_reduction}, since the existence of an \(LU\)-decomposition is equivalent to a matrix being row-reducible without performing row permutations; conjugation by \(P\) has the effect of placing rows and columns indexed by the pressing sequence in an initial position of the matrix.\\
\(2 \Leftrightarrow 6\): This is a consequence of Lemma \ref{lem:symmetric_and_LU}. \\ 
\(1 \Leftrightarrow 3\): The combination of Proposition \ref{prop:success_is_possible}, Proposition \ref{prop:row_reduction}, and Corollary \ref{cor:rank_cor} gives this equivalence.\\
\(1 \Leftrightarrow 4\): This follows from Proposition \ref{prop:row_reduction}.\\
\(4 \Leftrightarrow 5\): This is a consequence of Proposition \ref{prop:matchings}.\\
All conditions are therefore equivalent by transitivity.
\end{proof}

We may consider the special case when the labeling provides a successful pressing sequence.

\begin{thm}\label{thm:main_thm2} Given a bicolored labeled graph \(G\) on \([n]\), the following are equivalent:
\begin{enumerate}
\item The vertices of \(G\), in the usual order, are a successful pressing sequence.
\item \(A(G)\) can be written
\[
A(G) = L L^T
\]
for some invertible lower-triangular matrix \(L\).
\item Every leading principal minor of \(A(G)\) is nonzero for \(j \in [n]\).
\item The induced subgraph \(\hat{G}[\{1,\ldots,j\}]\) has an odd number of perfect matchings for each \(j \in [n]\).
\item \(A(G) = L U\) for some invertible lower triangular matrix \(L\) and invertible upper triangular matrix \(U\).
\end{enumerate}
\end{thm}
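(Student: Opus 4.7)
The plan is to obtain this theorem as the special case \(k = n\) of Theorem~\ref{thm:main_thm}, with the permutation matrix \(P\) taken to be the identity. Concretely, condition~(1) asserts that \(1, 2, \ldots, n\) is itself a successful pressing sequence; by Corollary~\ref{cor:rank_cor} this forces the pressing number, and hence \(\rank(A(G))\), to equal \(n\). Consequently, any factorization produced in conditions~(2) and~(5) must have invertible factors, so the invertibility hypotheses stated there are automatic consequences of the full-rank condition.

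The equivalences \((1) \Leftrightarrow (3)\) and \((1) \Leftrightarrow (5)\) follow directly from Proposition~\ref{prop:row_reduction} applied with \(k = n\): item~1 of that proposition states that Gaussian elimination proceeds without row permutations through all \(n\) rows, which is precisely the assertion that \(A(G)\) admits an \(LU\) decomposition with invertible factors; item~2 is condition~(3); and item~3 is condition~(1). For \((5) \Rightarrow (2)\), I would use that \(A(G)\) is symmetric and invertible, so Lemma~\ref{lem:symmetric_and_LU} upgrades the \(LU\) decomposition to a Cholesky form \(\tilde{L}\tilde{L}^T\), with \(\tilde{L}\) necessarily invertible (since \(A(G)\) is). The reverse implication is immediate, since \(L L^T\) is a particular \(LU\) factorization with invertible factors of the correct triangularity.

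For \((3) \Leftrightarrow (4)\), the key observation is that the \(j \times j\) leading principal submatrix of \(A(G)\) is exactly the augmented adjacency matrix \(A(G[\{1, \ldots, j\}])\) of the induced bicolored subgraph on vertex set \(\{1, \ldots, j\}\) (both the off-diagonal entries and the color-encoding diagonal entries are inherited from the restriction). Applying Proposition~\ref{prop:matchings} to each such submatrix shows that the \(j\)-th leading principal minor of \(A(G)\) is nonzero over \(\ff_2\) if and only if \(\hat{G}[\{1, \ldots, j\}]\) has an odd number of perfect matchings; requiring both conditions to hold simultaneously for every \(j \in [n]\) yields the equivalence of~(3) and~(4).

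Since everything reduces to invocations of results already established, there is essentially no deep obstacle. The only real bookkeeping subtlety will be verifying that the invertibility hypotheses on the triangular factors in conditions~(2) and~(5) are indeed automatic from~(1) via Corollary~\ref{cor:rank_cor}, so that no additional assumption slips into the theorem unnoticed, and that leading principal submatrices of the augmented adjacency matrix coincide with augmented adjacency matrices of induced subgraphs so that Proposition~\ref{prop:matchings} may be applied uniformly.
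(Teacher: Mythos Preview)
Your proposal is correct and follows essentially the same approach as the paper: both obtain the result by specializing Theorem~\ref{thm:main_thm} to the case \(k=n\) with \(P=I\), and both single out the one nontrivial check, namely that the lower-triangular factor \(L\) in the Cholesky decomposition can be taken to be invertible because \(A(G)\) itself has full rank. You are simply more explicit in tracing each equivalence back to the relevant proposition, whereas the paper compresses all of this into a one-line reference to the preceding theorem.
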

\begin{proof} All statements are simply specializations of those from the preceding Theorem, except for \(2\).  That \(L\) (and not just \(A(G)\)) can be taken to be full rank follows from the fact that \(\rank(L)=\rank(LL^T)\) if \(L\) is invertible.
\end{proof}

\section{Enumeration and Pressing Sequences}

\begin{prop} \label{prop:onebicoloringpersequence} Given a graph \(G\) with vertex set \([n]\) and a permutation \(\sigma\) of \([n]\), there is exactly one bicoloring of \(G\) for which \(\sigma\) is a valid pressing sequence.
\end{prop}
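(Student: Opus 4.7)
The plan is to reduce to the case $\sigma = \mathrm{id}$ and then argue by induction on the vertex index. Relabeling the vertices of $G$ by $\sigma^{-1}$ conjugates the augmented adjacency matrix by the permutation matrix $P$ corresponding to $\sigma$, and this preserves both the underlying graph structure (up to relabeling) and the property of a given order being a successful pressing sequence. So it suffices to prove the following: for any simple graph $G$ on $[n]$, there is a unique bicoloring $c$ for which $1, 2, \ldots, n$ is a successful pressing sequence for $(G, c)$.

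By condition 3 of Theorem \ref{thm:main_thm2}, this bicoloring problem is equivalent to choosing diagonal entries $d_1, \ldots, d_n \in \ff_2$ so that every leading principal minor of
\[
A(G) = B + \operatorname{diag}(d_1, \ldots, d_n)
\]
equals $1$ in $\ff_2$, where $B$ denotes the zero-diagonal adjacency matrix of the underlying simple graph (which does not depend on the coloring). Let $M_j$ denote the $j$-th leading principal minor.

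The key observation I would use is that $d_j$ appears only at the $(j,j)$ entry of the $j \times j$ leading principal submatrix, so expanding its determinant along row $j$ (or grouping the terms of the Leibniz sum by the image of $j$) gives
\[
M_j = d_j \cdot M_{j-1} + R_j,
\]
where $R_j \in \ff_2$ is determined by $B$ and $d_1, \ldots, d_{j-1}$ alone. I would then induct on $j$: the base case $M_1 = d_1 = 1$ forces $d_1 = 1$, and assuming inductively that $d_1, \ldots, d_{j-1}$ have been uniquely determined so that $M_1 = \cdots = M_{j-1} = 1$, the requirement $M_j = 1$ becomes $d_j = 1 + R_j$, which has a unique solution in $\ff_2$. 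This simultaneously establishes existence and uniqueness of the coloring.

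There is no real obstacle here; the proof is bookkeeping for the fact that each $d_j$ is linearly recoverable once the previous diagonal entries and the off-diagonal structure of $B$ are fixed, with coefficient $M_{j-1}$ that the induction has just guaranteed to be nonzero. One could also phrase the argument via condition 4 of Theorem \ref{thm:main_thm2}: placing or omitting a loop at vertex $j$ changes the number of perfect matchings of $\hat{G}[\{1,\ldots,j\}]$ by exactly $m_{j-1}$, which is odd by induction, so precisely one color choice for $j$ preserves oddness.
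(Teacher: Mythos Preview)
Your proof is correct and follows essentially the same approach as the paper's. The paper argues via the perfect matching characterization (your alternative phrasing at the end): toggling the loop at the $(k+1)$-st vertex shifts the matching count of $\hat{G}[\{\sigma(1),\ldots,\sigma(k+1)\}]$ by exactly the matching count of $\hat{G}[\{\sigma(1),\ldots,\sigma(k)\}]$, which is odd by induction; your main argument is the determinant translation of the same recurrence, $M_j = d_j M_{j-1} + R_j$.
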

\begin{proof} We apply the part of Theorem \ref{thm:main_thm} that says that \(\sigma\) is a valid pressing sequence if and only if \(\hat{G}[\sigma(1),\ldots,\sigma(k)]\) has an odd number of perfect matchings for each \(k \in [n]\).  The proof proceeds by induction on \(k\).  If \(k=1\), clearly the number of perfect matchings is odd if and only if \(\hat{G}[\sigma(1)]\) has a loop, so \(\sigma(1)\) must be black in the bicoloring.  Suppose the statement is true for \(k < n\).  Let \(a\) denote the number of perfect matchings of \(\hat{G}[\sigma(1),\ldots,\sigma(k)]\), \(b\) the number of perfect matchings of \(\hat{G}[\sigma(1),\ldots,\sigma(k+1)]\), and \(c\) the number of perfect matchings of \(\hat{G}[\{\sigma(1),\ldots,\sigma(k)\} \setminus N(\sigma(k+1))]\).  Then \(b = c\) if \(\sigma(k+1)\) does not have a loop, while \(b = a + c\) if \(\sigma(k+1)\) does have a loop.  Since \(a\) is odd by the inductive hypothesis, exactly one of \(c\) or \(a+c\) is odd, so the color of \(\sigma(k+1)\) is uniquely determined.
\end{proof}

As with real/complex matrices, a matrix \(Q\) over \(\ff_2\) is said to be ``orthogonal'' if \(Q^T Q = I\).  The set of all \(n \times n \) orthogonal matrices over \(\ff_2\) is the ``orthogonal group'' and is denoted \(\mathscr{O}(n)\). 

\begin{prop} \label{prop:LPQU} Suppose the bicolored graph \(G\) with vertex set \([n]\) has the identity permutation as a pressing sequence, let \(A\) be the augmented adjacency matrix of \(G\), and let \(A = L L^T\) be the Cholesky decomposition guaranteed by Theorem \ref{thm:main_thm}.  Then \(\sigma\) is also a pressing sequence of \(G\) iff there exist an orthogonal matrix \(Q\) and an upper triangular matrix \(U\) so that
\[
L^T P^T = QU,
\]
where \(P\) is the permutation matrix encoding \(\sigma\).
\end{prop}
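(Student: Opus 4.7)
The plan is to reduce the question to a Cholesky-existence problem via Theorem \ref{thm:main_thm2}, and then to establish a QR-style decomposition lemma over \(\ff_2\) that converts a Cholesky condition on a matrix of the form \(MM^T\) into a factorization condition on \(M\) itself.

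First, I would unpack the hypothesis and recast the target condition. Since the identity is a successful pressing sequence, Theorem \ref{thm:main_thm2}(2) gives the Cholesky factor \(L\) with \(A = LL^T\), and in particular \(L\) is invertible. Saying that \(\sigma\) is also a pressing sequence is the same as saying that the identity is a successful pressing sequence of the graph obtained from \(G\) by relabeling vertices along \(\sigma\). Under the natural convention \(P_{ij} = [\sigma(i)=j]\), the relabeled graph has adjacency matrix \(PAP^T\). Thus by Theorem \ref{thm:main_thm2} applied to the relabeled graph, \(\sigma\) is a pressing sequence iff \(PAP^T = \tilde{L}\tilde{L}^T\) for some invertible lower-triangular \(\tilde{L}\). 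Substituting \(A = LL^T\) turns this into \((PL)(PL)^T = \tilde{L}\tilde{L}^T\), so with \(M := PL\) (invertible, since both factors are), the question becomes: when does \(MM^T\) admit a Cholesky decomposition?

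Second, I would prove the following small lemma, which is the engine of the argument: for an invertible matrix \(M\) over \(\ff_2\), the identity \(MM^T = \tilde{L}\tilde{L}^T\) with \(\tilde{L}\) invertible lower triangular is equivalent to a factorization \(M = \tilde{L}Q\) with \(Q \in \mathscr{O}(n)\). The ``\(\Leftarrow\)'' direction is immediate from \(QQ^T = I\). For ``\(\Rightarrow\)'', set \(Q := \tilde{L}^{-1}M\) and compute
\[
QQ^T = \tilde{L}^{-1}(MM^T)\tilde{L}^{-T} = \tilde{L}^{-1}\tilde{L}\tilde{L}^T\tilde{L}^{-T} = I,
\]
so \(Q \in \mathscr{O}(n)\) (over \(\ff_2\), a square matrix satisfies \(QQ^T = I\) iff \(Q^TQ = I\)), and \(M = \tilde{L}Q\) follows.

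Finally, I would apply the lemma to \(M = PL\) and transpose the resulting equation \(PL = \tilde{L}Q\), obtaining \(L^TP^T = Q^T\tilde{L}^T\). Setting \(U := \tilde{L}^T\) (upper triangular and invertible) and replacing \(Q^T\) by \(Q\) (which remains orthogonal, since \(\mathscr{O}(n)\) is closed under transposition) yields exactly \(L^TP^T = QU\), as required; running the argument backward gives the converse. The main subtlety is pinning down the convention for \(P\) so that conjugation \(A \mapsto PAP^T\) corresponds to relabeling along \(\sigma\); beyond that bookkeeping, the proof is a routine exercise in tracking transposes and invoking the already-established Cholesky characterization from Theorem \ref{thm:main_thm2}.
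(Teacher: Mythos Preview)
Your argument is correct and is essentially the paper's own proof: both directions hinge on the Cholesky characterization of pressing sequences from Theorem~\ref{thm:main_thm2} applied to \(PAP^T\), with the orthogonal factor defined by \(Q = L^T P^T U^{-1}\) (your \(\tilde{L}^{-1}PL\) is just the transpose of this). The only cosmetic difference is that the paper works directly with the upper-triangular Cholesky factor \(U\) of \(PAP^T = U^TU\), whereas you pass through a lower-triangular \(\tilde{L}\) and transpose at the end.
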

\begin{proof} If \(L^T P^T = QU\), then
\[
P A P^T = P L L^T P^T = U^T Q^T Q U = U^T U,
\]
a Cholesky decomposition for the matrix \(P A P^T\).  On the other hand, suppose \(\sigma\) is a pressing sequence for \(G\).  Then
\[
P A P^T = U^T U.
\]
for some \(U\), whence
\[
P L L^T P^T = U^T U.
\]
Let \(Q = L^T P^T U^{-1}\); then \(Q^T Q = I\), so \(Q\) is orthogonal, and
\[
L^T P^T = Q U.
\]
\end{proof}

It is worth remarking that one may take \(P\) to be any permutation matrix representing an automorphism of \(G\), \(Q = I\), and \(U = L^T\) to obtain a solution to \(L^T P^T = Q U\).  Indeed, acting on a bicolored graph by an automorphism fixes its pressing sequences.  Therefore, by the above proposition, one may view successful pressing sequences as a kind of \(\ff_2\)-relaxation of automorphisms.

Given a matrix \(B\), we define a new matrix \(\psi(B)\) as follows.  Let the columns of \(B\) be \(b_1,\ldots,b_n\) and the columns of \(\psi(B)\), \(b_1^\prime,\ldots,b_n^\prime\).  If \(b_1^\prime,\ldots,b_k^\prime\) have been defined for \(k < n\), we define
\[
b_{k+1}^\prime = b_{k+1} + \sum_{j=1}^k b_j^\prime (b_j^\prime \cdot b_{k+1}).
\]
Note that we can also define \(b_{k+1}^\prime\) by
\[
b_{k+1}^\prime = b_{k+1} + \sum_{\substack{j < k \\ b_j^\prime \cdot b_{k+1} = 1}} b_j^\prime.
\]

\begin{prop} Suppose the bicolored graph \(G\) with vertex set \([n]\) has the identity permutation as a pressing sequence, let \(A\) be the augmented adjacency matrix of \(G\), and let \(A = L L^T\) be the Cholesky decomposition guaranteed by Theorem \ref{thm:main_thm2}.  Let \(\sigma\) be a permutation of \([n]\) and \(P\) is the permutation matrix encoding \(\sigma\).  Then \(\sigma\) is a valid pressing sequence for \(G\) iff the all-ones vector \(\hat{1}\) is a (left) eigenvector of \(\psi(L^T P^T)\) iff \(\psi(L^T P^T)\) is orthogonal.
\end{prop}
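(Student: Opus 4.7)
The plan is to thread both equivalences through condition (iii) and to use Proposition \ref{prop:LPQU} as the bridge to pressing sequences. Let \(B := L^T P^T\) with columns \(b_1,\ldots,b_n\), let \(B' := \psi(B)\) with columns \(b_1',\ldots,b_n'\), and begin by rearranging the defining recursion of \(\psi\) as
\[
b_{k+1} = b_{k+1}' + \sum_{j \le k} b_j'\,(b_j' \cdot b_{k+1}),
\]
so that \(B = B'R\) for an upper triangular \(R\) with ones on the diagonal. Since \(L\) and \(P\) are invertible, so are \(B\), \(R\), and hence \(B'\). This identity already delivers the easier direction \((\mathrm{iii}) \Rightarrow (\mathrm{i})\): if \(B'\) is orthogonal, then \(B = B'R\) is a factorization of the form required by Proposition \ref{prop:LPQU}, so \(\sigma\) is a valid pressing sequence.

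For the reverse direction \((\mathrm{i}) \Rightarrow (\mathrm{iii})\), Proposition \ref{prop:LPQU} produces \(B = QU\) with \(Q\) orthogonal and \(U\) upper triangular. Invertibility of \(B\) forces \(U\) to be an invertible upper triangular matrix over \(\ff_2\), hence to have ones on the diagonal. Writing the \(j\)-th column of \(B\) as \(b_j = q_j + \sum_{i<j} U_{ij}\, q_i\), I would prove by induction on \(j\) that \(b_j' = q_j\): under the hypothesis \(b_i' = q_i\) for \(i < j\), orthonormality of the columns of \(Q\) reduces \(b_k' \cdot b_j\) to exactly \(U_{kj}\), and the \(\psi\)-recursion then collapses in characteristic 2 to \(b_j' = q_j\). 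Therefore \(\psi(B) = Q\) is orthogonal.

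For \((\mathrm{ii}) \Leftrightarrow (\mathrm{iii})\), use the identity \(b_j' \cdot b_j' = \hat{1}^T b_j'\), valid over \(\ff_2\) because \(x^2 = x\). Since \(B'\) is invertible, \(\hat{1}\) cannot be a left eigenvector with eigenvalue \(0\), so (ii) is equivalent to \(b_j' \cdot b_j' = 1\) for every \(j\) (each column of \(B'\) has odd weight). Orthogonality of \(B'\) includes this normalization condition trivially. For the converse I would induct on \(j\) to show \(b_i' \cdot b_{j}' = \delta_{ij}\) for \(i \le j\): for \(i < j\), expanding via the \(\psi\)-recursion and using \(b_i' \cdot b_k' = \delta_{ik}\) for \(k < j\) (inductive hypothesis), the cross-sum collapses to \((b_i' \cdot b_i')(b_i' \cdot b_j) = b_i' \cdot b_j\), which cancels the leading \(b_i' \cdot b_j\) term, yielding \(b_i' \cdot b_j' = 0\).

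The main delicacy is the step \((\mathrm{i}) \Rightarrow (\mathrm{iii})\). Gram--Schmidt over \(\ff_2\) does not in general yield an orthogonal output, because there is no normalization available and nonzero vectors can be self-orthogonal. The argument hinges on the fact that the input \(B\) \emph{already} possesses a decomposition \(B = QU\) with \(Q\) orthogonal: this forces the projection coefficients \(b_k' \cdot b_j\) computed inside \(\psi\) to coincide with the entries of the triangular factor \(U\), so that \(\psi\) has no choice but to reconstruct \(Q\) column by column.
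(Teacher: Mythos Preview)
Your proof is correct and follows the same route as the paper: interpret \(\psi\) as Gram--Schmidt over \(\ff_2\), use the identity \(v\cdot v = \hat{1}^T v\) to tie self-orthogonality to the eigenvector condition, and invoke Proposition \ref{prop:LPQU} for the bridge to pressing sequences. If anything, you are more explicit than the paper in the direction \((\mathrm{i})\Rightarrow(\mathrm{iii})\), where you verify by induction that \(\psi(B)\) actually recovers the orthogonal factor \(Q\) of a given \(QU\) decomposition rather than merely asserting that Gram--Schmidt does so.
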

\begin{proof}
Note that the computation of \(\psi(B)\) is precisely that of performing the Gram-Schmidt algorithm on the columns of \(B\), except that no ``normalization'' occurs, i.e., one does divide by the norm of the resulting vectors.  However, over \(\ff_2\), there are only two possible ``norms'': 0 and 1.  Therefore, if the norm of each of the \(b_j^\prime\) produced in the computation of \(\psi(B)\) is \(1\) for all \(j\), the columns of \(\psi(B)\) are the same as the output of Gram-Schmidt orthonormalization, whence we obtain a factorization of \(B\) of the form \(QU\) with \(Q\) orthogonal and \(U\) upper triangular.  (This is usually termed a ``QR-factorization''.)  The only failure mode of this computation is if some \(b_j\) is self-orthogonal.  Since self-orthogonality is equivalent to having an inner product of \(0\) with \(\hat{1}\), by Proposition \ref{prop:LPQU}, if \(\sigma\) is a successful pressing sequence, \(\psi(L^T P^T)\) is orthogonal and \(\hat{1}^T \psi(L^T P^T) = \hat{1}^T\); otherwise, \(\psi(L^T P^T)\) is not orthogonal and \(\hat{1}^T \psi(L^T P^T) \neq \hat{1}^T\).
\end{proof}

\begin{thm} \label{thm:average} For an (ordinary) graph \(G\) on \(n\) vertices, let \(\alpha_G\) denote the average number of length-\(n\) successful pressing sequences over all bicolorings of \(G\).  Then
\[
\alpha_G = \frac{n!}{2^n}.
\]
\end{thm}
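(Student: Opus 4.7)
The plan is to prove this by a double counting argument, exploiting Proposition \ref{prop:onebicoloringpersequence} in an essential way. Let me set up the count. For a fixed underlying graph \(G\) on vertex set \([n]\), consider the set
\[
S = \{(c,\sigma) : c \text{ a bicoloring of } G,\ \sigma \in S_n \text{ is a successful pressing sequence for } (G,c)\}.
\]
A length-\(n\) successful pressing sequence for \((G,c)\) is exactly a permutation \(\sigma\) of \([n]\) which, as a sequence of presses, transforms \((G,c)\) into the empty all-white graph; this is precisely the notion of ``valid pressing sequence'' of length \(n\) used in Proposition \ref{prop:onebicoloringpersequence}.

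The first step is to count \(|S|\) by summing over \(\sigma\) first. For each fixed permutation \(\sigma \in S_n\), Proposition \ref{prop:onebicoloringpersequence} guarantees that there is exactly one bicoloring \(c_\sigma\) of \(G\) for which \(\sigma\) is a valid pressing sequence. Hence
\[
|S| = \sum_{\sigma \in S_n} 1 = n!.
\]
The second step is to sum over \(c\) first: by definition,
\[
|S| = \sum_{c} \bigl(\text{number of length-}n\text{ successful pressing sequences of }(G,c)\bigr).
\]
There are \(2^n\) bicolorings of \(G\), so dividing gives
\[
\alpha_G = \frac{1}{2^n}\sum_{c}\bigl(\text{number of length-}n\text{ successful pressing sequences of }(G,c)\bigr) = \frac{n!}{2^n},
\]
as claimed.

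There is essentially no obstacle here once Proposition \ref{prop:onebicoloringpersequence} is in hand, as the entire argument is a one-line double count. The only thing worth verifying carefully is the identification between the ``valid pressing sequence'' of Proposition \ref{prop:onebicoloringpersequence} (a permutation \(\sigma\) of \([n]\), hence of length \(n\)) and the ``length-\(n\) successful pressing sequence'' in the statement of Theorem \ref{thm:average}; both require that pressing the vertices in the order \(\sigma(1),\ldots,\sigma(n)\) is legal at every step and terminates in the empty all-white graph, so the two notions coincide.
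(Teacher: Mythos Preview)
Your proof is correct and is essentially identical to the paper's own argument: both double-count the pairs $(c,\sigma)$ with $\sigma$ a successful pressing sequence for $(G,c)$, using Proposition~\ref{prop:onebicoloringpersequence} to see that each $\sigma$ contributes exactly one such pair. The paper phrases this as counting edges in a bipartite graph between permutations and bicolorings, but the content is the same.
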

\begin{proof} We construct a bipartite graph \(\Gamma\) as follows.  One partition class \(S\) consists of all permutations of \([n]\); the other partition class is \(C\), the set of all bicolorings of \(G\), which we assume has vertex set \([n]\).  We place an edge between a permutation \(\sigma\) and a bicolored graph \(G\) iff \(\sigma\) is a successful pressing sequence for \(G\).  By Proposition \ref{prop:onebicoloringpersequence}, the \(\Gamma\)-degree of each vertex in \(S\) is \(1\), so the number of edges in \(\Gamma\) is \(n!\).  On the other hand, the number of edges incident to a bicolored graph is its number of length-\(n\) successful pressing sequences.  Therefore, the sum of all degrees in \(C\) is \(\alpha_G 2^n\).  We may conclude that
\[
\alpha_G = \frac{n!}{2^n}.
\]
\end{proof}

Note that, since a particular labeling of a graph admits precisely one bicoloring so that \(\rank(A(G))=n\), the probability that a symmetric matrix (i.e., the adjacency matrix of a bicolored graph) is Cholesky \(2^{-n}\).

\section{Conclusion}

We present a few open questions on the subject of pressing sequences in addition to the Pressing Game Conjecture (discussed in the introduction).

\begin{question} How hard is it in general to compute the number of successful pressing sequences of a given bicolored graph?
\end{question}

By the remarks following Proposition \ref{prop:onebicoloringpersequence}, it is perhaps the case that this enumeration problem is GI-complete, i.e., the same difficulty as certifying graph isomorphism and counting automorphisms.  Alternatively, the connection with counting perfect matchings suggests it might be \#P-hard.  Given that we do not know the complexity of counting pressing sequences exactly, perhaps the approximation problem is easier:

\begin{question} Is there a polynomial time algorithm for approximating within a small factor the number of successful pressing sequences of a given bicolored graph?
\end{question}

In studying some of these questions, the authors found a substantial, though small, number of nonisomorphic graphs which have exactly one pressing sequence -- graphs we term ``uniquely pressable''.  However, we lack a characterization of these graphs.

\begin{question} Describe the uniquely pressable graphs.
\end{question}

\section{Acknowledgments}

Thank you to \'{E}va Czabarka and Kevin Costello for helpful and insightful discussions.  Thank you also to the South Carolina Honors College at the University of South Carolina for supporting the present work with their Science Undergraduate Research Fellowship.


\end{document}